\newtheorem{theorem}{Theorem}[section]
\newtheorem{lemma}[theorem]{Lemma}
\newtheorem{prop}[theorem]{Proposition}
\theoremstyle{definition}
\newtheorem{rems}[theorem]{Remarks}
\newcommand\ZZ{\mathbb Z}
\newcommand\FF{\mathbb{F}}
\numberwithin{equation}{section}
\title{The Fourier expansion of $\eta(z)\eta(2z)\eta(3z)/\eta(6z)$}
\author{Christian Kassel}
\address{Christian Kassel: 
Institut de Recherche Math\'e\-ma\-tique Avanc\'ee,
CNRS \& Universit\'e de Strasbourg,
7 rue Ren\'{e} Descartes, 67084 Strasbourg, France}
\email{kassel@math.unistra.fr}
\urladdr{www-irma.u-strasbg.fr/\raise-2pt\hbox{\~{}}kassel/}
\author{Christophe Reutenauer}
\address{Christophe Reutenauer:
Math\'ematiques, Universit\'e du Qu\'ebec \`a Montr\'eal,
Montr\'eal, CP 8888, succ.\ Centre Ville, Canada H3C 3P8}
\email{reutenauer.christophe@uqam.ca}
\urladdr{www.lacim.uqam.ca/\raise-2pt\hbox{\~{}}christo/}
\keywords{Dedekind eta function, eta products, Fourier coefficient, punctual Hilbert scheme}
\subjclass[2010]{(Primary)
11F11, 
11F20, 
14C05, 
14G15, 
14N10. 
}
\begin{document}

\begin{abstract}
We compute the Fourier coefficients of the weight one modular form
$\eta(z)\eta(2z)\eta(3z)/\eta(6z)$ in terms of the number of representations of an integer
as a sum of two squares. 
We deduce a relation between this modular form and translates of the modular form
$\eta(z)^4/\eta(2z)^2$.
In the last section we use our main result to give an elementary proof of an identity by Victor Kac.
\end{abstract}

\maketitle

\section{Introduction}

In this note we consider the $\eta$-product 
\begin{equation}\label{def-eta6}
\frac{\eta(z)\eta(2z)\eta(3z)}{\eta(6z)} = \prod_{n\geq 1}\, \frac{(1-q^n)^2}{1 - q^n +q^{2n}}.
\end{equation}
where $q = e^{2\pi i z}$. Recall that $\eta(z)$ is Dedekind's eta function
\begin{equation*}
\eta(z) = e^{\pi i z/12} \, \prod_{n\geq1} \, (1-q^n) .
\end{equation*}
The $\eta$-product $\eta(z)\eta(2z)\eta(3z)/\eta(6z)$ is a modular form of weight~$1$ and level~$6$.
Since it is invariant under the transformation $z \mapsto z+1$, it has a Fourier expansion of the form
\begin{equation}\label{eqFourier}
\frac{\eta(z)\eta(2z)\eta(3z)}{\eta(6z)} = \sum_{n\geq 0} \, a_6(n) \, q^n,
\end{equation}
where the Fourier coefficients $a_6(n)$ are integers. 
For general information on $\eta$-products, see\,\cite[Sect.\,2.1]{Ko}.

Our main result expresses~$a_6(n)$ in terms of the number $r(n)$ of representations
of $n$ as the sum of two squares, i.e the number of elements $(x,y)\in \ZZ^2$ such that $x^2 + y^2 = n$.
Observe that $r(n)$ is divisible by~$4$ for all $n\geq 1$ (for $n=0$ we have $r(0) = 1$).
The sequence~$r(n)$ appears as Sequence~A004018 in\,\cite{OEIS}.

\begin{theorem}\label{th-a6r}
For all non-negative integers $m$ we have
\begin{eqnarray*}
a_6(3m) & = & (-1)^m \, r(3m), \\
a_6(3m+1) & = & (-1)^{m+1} \, \frac{r(3m+1)}{4}, \\
a_6(3m+2) & = & (-1)^{m+1} \, \frac{r(3m+2)}{2}.
\end{eqnarray*}
\end{theorem}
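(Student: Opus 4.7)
The plan is to reduce the theorem to an identity between two $q$-series that are both holomorphic weight-one modular forms on a congruence subgroup of level~$12$, and then verify equality by comparing finitely many Fourier coefficients.

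First, I would recast the right-hand side as a single twisted theta series. Define the sign $\varepsilon(n)\in\{\pm 1\}$ (depending only on the residue of $n$ modulo~$6$) and the weight $w(n)\in\{1,1/4,1/2\}$ (depending on $n\bmod 3$) so that the stated formula becomes $a_6(n) = \varepsilon(n)\,w(n)\,r(n)$. The fractional weights, together with $r(n)=\#\{(x,y)\in\mathbb Z^2 : x^2+y^2=n\}$, strongly suggest realizing
\[
\sum_{n\geq 0}\varepsilon(n)\,w(n)\,r(n)\,q^n \;=\; \Theta(z) \;:=\; \sum_{(x,y)\in\mathbb Z^2}\psi(x,y)\,q^{x^2+y^2},
\]
for a $12$-periodic, even function $\psi:\mathbb Z^2\to\{-1,0,+1\}$. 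The underlying congruence analysis is that $x^2+y^2\equiv 0\pmod 3$ forces both $x,y\equiv 0\pmod 3$; $x^2+y^2\equiv 1\pmod 3$ forces exactly one of $x,y$ to be divisible by~$3$; and $x^2+y^2\equiv 2\pmod 3$ forces neither. The function $\psi$ selects precisely one out of every four sign-orbits (respectively one out of two) when $n\equiv 1\pmod 3$ (resp.\ $n\equiv 2\pmod 3$), while retaining all representations when $n\equiv 0\pmod 3$; the alternating signs modulo $6$ encode~$\varepsilon(n)$.

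Second, I would prove the identity $\eta(z)\eta(2z)\eta(3z)/\eta(6z)=\Theta(z)$. Both sides lie in the finite-dimensional space of weight-one modular forms on $\Gamma_0(12)$ with a common Nebentypus character: the $\eta$-product has level~$6$ and can be viewed at level~$12$, while $\Theta$ is a classical Hecke theta series attached to the quadratic form $x^2+y^2$ twisted by~$\psi$. Equality then follows from a comparison of Fourier coefficients up to Sturm's bound, which can be carried out by direct expansion of the infinite product in~\eqref{def-eta6}.

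The main obstacle is pinning down $\psi$ explicitly and verifying its modular transformation properties. Guided by the first dozen coefficients of the $\eta$-product computed from~\eqref{def-eta6}, one can isolate the correct congruence pattern modulo~$12$; after that the Sturm bound argument is standard. The relation announced in the abstract between our form and the translates of $\eta(z)^4/\eta(2z)^2 = \theta_4(z)^2 = \sum_n(-1)^n r(n)\,q^n$ should then drop out by re-expressing $\Theta$ as a linear combination of $\theta_4(z+c)^2$ for a handful of rational shifts~$c$.
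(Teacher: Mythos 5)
Your strategy (realize the right-hand side as a twisted theta series $\Theta$ and compare the two modular forms up to the Sturm bound) is a reasonable one in outline, and it is close in spirit to the alternative proof the paper gives in Section~\ref{sec-Kac}, where the $\eta$-product is factored as in~\eqref{obs-GK} and $a_6(n)$ is computed as $\sum_{x^2+y^2=n} a(x)b(y)$. But as written your proposal has two genuine gaps. The first is that your ansatz for $\psi$ is already inconsistent: you require $\psi$ to be \emph{even} and valued in $\{-1,0,+1\}$, but then the coefficient of $q$ in $\Theta$ is $2\bigl(\psi(1,0)+\psi(0,1)\bigr)$, an even integer, whereas the target coefficient is $\varepsilon(1)w(1)r(1)=-\tfrac14\cdot 4=-1$. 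So no such $\psi$ exists. The weight function that actually works (implicit in the paper's Lemma~\ref{lem-GK}) is $\psi(x,y)=(-1)^{x+y}f(y)$ with $f$ the $3$-periodic function taking the values $1,-1,0$ on residues $0,1,2$; this $\psi$ is \emph{not} even (e.g.\ $\psi(0,1)=1$ but $\psi(0,-1)=0$), and its symmetrization takes half-integer values. Moreover, even with the correct $\psi$ in hand, the identity $\sum_{x^2+y^2=n}\psi(x,y)=\varepsilon(n)w(n)r(n)$ is not automatic from the mod-$3$ congruence analysis: the degenerate representations (those with $x=0$ or $y=0$, which occur exactly when $n$ or $n/ 3^{2s}$ is a square) must be tracked separately, and this bookkeeping is precisely the content of the paper's Lemma~\ref{lem-GK}.

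The second gap is the modularity of $\Theta$, which you flag as ``the main obstacle'' and then defer. Since $\psi$ factors as a function of $x$ times a function of $y$, $\Theta$ is a product of two one-variable theta series of weight $1/2$; the $x$-factor is Gauss's $\eta(z)^2/\eta(2z)$~\eqref{eq-Gauss}, but identifying the $y$-factor $\sum_{y}(-1)^y f(y)q^{y^2}$ as a modular form (indeed as the $\eta$-quotient $\eta(2z)^2\eta(3z)/(\eta(z)\eta(6z))$) is exactly Kac's identity~\eqref{Kac-eta} --- a nontrivial fact that the paper deliberately treats as a \emph{consequence} of Theorem~\ref{th-a6r} rather than an ingredient. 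Until this (or an equivalent appeal to Shimura-type theta theory with a precise level and Nebentypus) is supplied, the Sturm-bound comparison has nothing to stand on. Note also that the paper's main proof avoids all of this: it uses Fine's expansion~\eqref{eq-Fine} to write $a_6(n)=\sum_{d\mid n,\ d\ \mathrm{odd}}\xi(2n/d-d)$ with $\xi(m)=-2\sin(m\pi/6)$, reduces the inner terms modulo $12$ by a case analysis on $n\bmod 3$ and on the power of $3$ dividing $d$, and concludes with Jacobi's formula $r(n)=4E_1(n;4)$ --- an entirely elementary route that requires no modularity argument at all. To salvage your plan you would need to (i) replace the even $\{0,\pm1\}$-valued $\psi$ by the correct asymmetric product weight, (ii) prove the counting identity including the degenerate orbits, and (iii) either prove Kac's identity independently or justify the modularity of the $y$-theta factor; at that point the Sturm bound becomes unnecessary, since (i)--(iii) already give the theorem directly.
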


We next relate $\eta(z)\eta(2z)\eta(3z)/\eta(6z)$ to the weight one modular form $\eta(z)^4/\eta(2z)^2$
and two of its translates. 

\begin{theorem}\label{th-linear}
Set $j = e^{2\pi i/3}$.
We have the following linear relation between weight one modular forms:
\begin{equation*}
\frac{\eta(z)\eta(2z)\eta(3z)}{\eta(6z)}
= \frac{1}{4} \frac{\eta(z)^4}{\eta(2z)^2} + \frac{1-j}{4} \frac{\eta(z+1/3)^4}{\eta(2z+2/3)^2} 
+ \frac{1-j^2}{4} \frac{\eta(z+2/3)^4}{\eta(2z+4/3)^2} .
\end{equation*}
\end{theorem}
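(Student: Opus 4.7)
The plan is to expand both sides as Fourier series in $q = e^{2\pi i z}$ and compare coefficients. The Fourier expansion of the left-hand side is already known explicitly by Theorem~\ref{th-a6r}, so the real task is to expand the right-hand side.

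The key ingredient I would use is the classical identity
\begin{equation*}
\frac{\eta(z)^4}{\eta(2z)^2} = \sum_{n \geq 0} (-1)^n \, r(n)\, q^n,
\end{equation*}
which is obtained from the Jacobi triple product formula $\eta(z)^2/\eta(2z) = \sum_{n \in \ZZ} (-1)^n q^{n^2}$ by squaring and collecting terms (since $(-1)^{a+b} = (-1)^{a^2+b^2}$). Setting $f(z) := \eta(z)^4/\eta(2z)^2$, it follows at once that $f(z+k/3) = \sum_{n \geq 0} (-1)^n r(n)\, j^{kn}\, q^n$ for $k = 0, 1, 2$.

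Because $\eta(2z+2k/3) = \eta\bigl(2(z+k/3)\bigr)$, the three summands on the right-hand side of Theorem~\ref{th-linear} are precisely $\tfrac{1}{4} f(z)$, $\tfrac{1-j}{4} f(z+1/3)$, and $\tfrac{1-j^2}{4} f(z+2/3)$. The right-hand side therefore equals $\sum_{n \geq 0} b(n) q^n$ with
\begin{equation*}
b(n) \;=\; \frac{(-1)^n\, r(n)}{4} \, \bigl( 1 + (1-j)\, j^n + (1-j^2)\, j^{2n} \bigr).
\end{equation*}

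It remains to check that $b(n) = a_6(n)$ for every $n \geq 0$. Using $1+j+j^2 = 0$, one easily computes that the bracketed factor equals $4$, $1$, or $-2$ according to whether $n \equiv 0, 1, 2 \pmod 3$, which after multiplication by $(-1)^n r(n)/4$ and use of $(-1)^{3m+k} = (-1)^{m+k}$ reproduces the three cases of Theorem~\ref{th-a6r} on the nose. The entire argument is thus a matter of Fourier bookkeeping; the only nonformal ingredient is the classical identity above, and the only potential obstacle is recognizing it as exactly the right input, since all subsequent manipulations are routine finite checks.
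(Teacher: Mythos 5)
Your proposal is correct and is essentially the paper's own proof: both expand the right-hand side via the substitution $q \mapsto j^k q$, split into residue classes mod~$3$ using the roots-of-unity combinations $\tfrac14\bigl(1+(1-j)j^n+(1-j^2)j^{2n}\bigr)$ (which the paper writes as $a+j^nb+j^{2n}c$ and evaluates to $1$, $1/4$, $-1/2$), and then invoke Theorem~\ref{th-a6r} together with $a_2(n)=(-1)^nr(n)$ to match coefficients with $a_6(n)$. Your sketch of the classical identity $\eta(z)^4/\eta(2z)^2=\sum_{n\ge0}(-1)^nr(n)q^n$ from Gauss's theta series is a slightly more self-contained presentation of the input the paper cites as \eqref{eq-a2r} and \eqref{eq-a6eta}, but the argument is the same.
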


Both modular forms $\eta(z)\eta(2z)\eta(3z)/\eta(6z)$ and $\eta(z)^4/\eta(2z)^2$ came up naturally
in\,\cite{KR}, where we computed the number~$C_n(q)$ of ideals of codimension~$n$ of 
the algebra $\FF_q[x,y,x^{-1}, y^{-1}]$ of Laurent polynomials in two variables over a finite field~$\FF_q$ of cardinality~$q$.
Equivalently, $C_n(q)$ is the number of $\FF_q$-points of the Hilbert scheme of $n$~points on a two-dimensional torus.
We proved that $C_n(q)$ is the value at $q$ of a palindromic one-variable polynomial $C_n(x) \in \ZZ[x]$
with integer coefficients, which we computed completely (see\,\cite[Th.\,1.3]{KR}).

We also showed (see\,\cite[Cor.\,6.2]{KR})
that the generating function of the polynomials~$C_n(x)$ can be expressed as the following infinite product:
\begin{equation}\label{inf-prod}
1 + \sum_{n\geq 1} \, \frac{C_n(x)}{x^n}  \, q^n
= \prod_{n\geq 1}\, \frac{(1-q^n)^2}{1-(x+ x^{-1})q^n +  q^{2n}} \, .
\end{equation}

It follows from the previous equality that $C_n(1) = 0$. Actually, we proved (see\,\cite[Th.\,1.3 and\,1.4]{KR})
that there exists a polynomial $P_n(x) \in \ZZ[x]$ such that $C_n(x) = (x-1)^2 P_n(x)$. Moreover,
$P_n(x)$ is palindromic, has non-negative coefficients and its value at $x = 1$ is equal to the sum of divisors of~$n$:
$P_n(1) = \sum_{d|n}\, d$.

When $x = e^{2i\pi/k}$ with $k = 2, 3, 4$, or~$6$, then $x+ x^{-1} = 2 \cos(2\pi/k)$ is an integer.
For such an integer~$k$, we define the sequence $a_k(n)$ by
\begin{equation}\label{eq-def}
\sum_{n\geq 0} \, a_k(n) \, q^n =
\prod_{n\geq 1}\, \frac{(1-q^n)^2}{1- 2 \cos(2\pi/k) \, q^n +  q^{2n}} .
\end{equation}
Since $2 \cos(2\pi/k)$ is an integer, so is each~$a_k(n)$. 
It follows from\,\eqref{inf-prod} that these integers are related to the polynomials~$C_n(x)$ by
\begin{equation*}
C_n(e^{2i\pi/k}) = a_k(n) \, e^{2ni\pi/k} .
\end{equation*}
In\,\cite{KR} we computed $a_2(n)$, $a_3(n)$, and $a_4(n)$ explicitly in terms of well-known arithmetical functions.
In particular, we established the equality
\begin{equation}\label{eq-a2r}
a_2(n) = (-1)^n \, r(n),
\end{equation}
where $r(n)$ is the number of representations of $n$ as the sum of two squares.

We also observed in\,\cite[(1.8)]{KR} that 
\begin{equation}\label{eq-a6eta}
\sum_{n\geq 0} \, a_2(n) \, q^n = \frac{\eta(z)^4}{\eta(2z)^2}
\quad\text{and}\quad
\sum_{n\geq 0} \, a_6(n) \, q^n = \frac{\eta(z)\eta(2z)\eta(3z)}{\eta(6z)}.
\end{equation}

The question of finding an explicit expression for~$a_6(n)$ had been left open in\,\cite{KR}. 
This is now solved with Theorem\,\ref{th-a6r} of this note.
In view of this theorem, of\,\eqref{eq-a2r}, and of\,\eqref{eq-a6eta}, 
for all $m\geq 0$ we obtain
\begin{eqnarray}\label{eq-a2a6}
\begin{cases}
\hskip 19pt a_6(3m) = & a_2(3m), \\
\noalign{\medskip}
a_6(3m+1) = & \displaystyle{\frac{a_2(3m+1)}{4}}, \\
\noalign{\medskip}
a_6(3m+2) = & - \displaystyle{\frac{a_2(3m+2)}{2}}.
\end{cases}
\end{eqnarray}

We had experimentally observed (see\,\cite[Footnote\,7]{KR}) that $a_6(n) = 0$ whenever $a_2(n) = 0$.
As a consequence of~\eqref{eq-a2a6} we can now state that $a_6(n) = 0$ if and only if $a_2(n) = 0$, i.e.
if and only $n$ is not the sum of two squares.

\begin{rems}
(a) The sequence~$a_6(n)$ is Sequence~A258210 in\,\cite{OEIS}.
The sequence $a_6(3n+1)$ is probably the opposite of Sequence~A258277 in \emph{loc.\ cit.}

(b) It can be seen from Table\,\ref{table-6th-root} that $a_6(n)$ is not a multiplicative function.
Indeed, $a_6(10) \neq  a_6(2) a_6(5)$ or $a_6(18) \neq  a_6(2) a_6(9)$ or $a_6(20) \neq  a_6(4) a_6(5)$.
\end{rems}

{\tiny
\begin{table}[ht]
\caption{\emph{First values of $a_6(n)$}}\label{table-6th-root}
\renewcommand\arraystretch{1.2}
\noindent\[
\begin{array}{|c||c|c|c|c|c|c|c|c|c|c|c|c|c|c|c|c|c|c|c|c|}
\hline
n & 1 & 2 & 3 & 4 & 5 & 6 & 7& 8& 9 & 10 & 11 & 12 & 13 & 14& 15 & 16 & 17 & 18 & 19 & 20 \\
\hline
\hline
a_6(n) & -1 & -2 & 0 & 1 & 4 & 0 & 0 & -2 & -4 & 2 & 0 & 0 & -2 & 0 & 0 & 1 & 4 & 4 & 0 & -4\\
\hline
\end{array}
\]
\end{table}
}

Theorems\,\ref{th-a6r} and\,\ref{th-linear} will be proved in the next two sections.
In Section\,\ref{sec-Kac} we explain how to obtain an elementary proof of an identity which Victor Kac\,\cite{Ka} obtained
using his theory of contragredient Lie superalgebras.

\section{Proof of Theorem\,\ref{th-a6r}}\label{sec-th-a6r}

\subsection{}\label{ssec-xi}

For any odd integer $m$ we set $\xi(m) = -2 \sin(m\pi/6)$. Because of the well-known properties of the sine function,
$\xi(m)$ depends only on the class of~$m$ modulo~$12$ and we have the following equalities for all odd~$m$:
\begin{equation}\label{eq-sinus}
\xi(-m) = -\xi(m) \quad\text{and}\quad
\xi(m+6) = - \xi(m),
\end{equation}
which is equivalent to $\xi(-m) = -\xi(m)$ and $\xi(6-m) =  \xi(m)$.

We have
\begin{equation}\label{eq-xi}
\xi(m) = 
\begin{cases}
-1 \quad \text{if} \; m \equiv 1 \; \text{or}\;  5 \pmod{12}, \\
-2 \quad \text{if} \; m \equiv 3 \pmod{12}, \\
1 \qquad \text{if} \; m \equiv 7 \; \text{or}\; 11 \pmod{12}, \\
2 \qquad \text{if} \; m \equiv 9 \pmod{12}.
\end{cases}
\end{equation}

Next consider the excess function $E_1(n;4)$ defined by
\begin{equation*}
E_1(n;4) = \sum_{d|n \, , \; d \equiv 1 \hskip -7pt\pmod{4}} \, 1 - \sum_{d|n \, , \; d \equiv -1 \hskip -7pt\pmod{4}} \, 1.
\end{equation*}
It is a multiplicative function, i.e. $E_1(mn;4) = E_1(m;4) \, E_1(n;4)$ whenever $m$ and $n$ are coprime.
It is well known that
the excess function can be computed in terms of the prime decomposition of~$n$.
Write $n = 2^c p_1^{a_1} p_2^{a_2} \cdots q_1^{b_1}q_2^{b_2}\cdots$, where all $p_i$, $q_i$ are distinct prime numbers
such that $p_i \equiv 1 \hskip -3pt\pmod{4}$ and $q_i \equiv 3 \hskip -3pt\pmod{4}$ for all $i\geq 1$. 
Then $E_1(n;4) = 0$ if and only if one of the exponents $b_i$ is odd. If all $b_i$ are even, then
\begin{equation}\label{eq-calculE}
E_1(n;4) = (1 + a_1)(1 + a_2)\cdots .
\end{equation}

In the sequel we will need the following result.

\begin{lemma}\label{lem-xi}
Let $n$ be a positive integer which is not divisible by~$3$.
We have
\begin{equation*}
\sum_{d|n \, , \; d\, \mathrm{odd}} \, \xi(d) = -E_1(n;4) 
\quad\text{and}\quad
\sum_{d|n \, , \; d\, \mathrm{odd}} \, \xi(3d) = -2E_1(n;4) .
\end{equation*}
\end{lemma}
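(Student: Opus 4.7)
The plan is to reduce both sums to straightforward weighted counts of residue classes modulo $12$ and then recognize the result as (a multiple of) the excess function $E_1(n;4)$.

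First, I observe that since $n$ is not divisible by $3$, every odd divisor $d$ of $n$ satisfies $\gcd(d,6) = 1$, hence $d$ lies in one of the four residue classes $1, 5, 7, 11 \pmod{12}$. Crossreferencing with the classification of $\xi$ in \eqref{eq-xi}, the values $\xi(d) = -2$ and $\xi(d) = 2$ (which correspond to $d \equiv 3$ and $d \equiv 9 \pmod{12}$) cannot occur, so only the values $\pm 1$ appear. Moreover, among residues coprime to~$6$, the classes $d \equiv 1, 5 \pmod{12}$ are exactly those with $d \equiv 1 \pmod 4$ (where $\xi(d) = -1$), and the classes $d \equiv 7, 11 \pmod{12}$ are exactly those with $d \equiv -1 \pmod 4$ (where $\xi(d) = 1$).

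For the first sum, this observation immediately gives
\begin{equation*}
\sum_{d\mid n,\, d \text{ odd}} \xi(d)
= -\!\!\sum_{d\mid n,\, d\equiv 1 \!\!\pmod 4}\!\! 1 \; + \!\!\sum_{d\mid n,\, d\equiv -1 \!\!\pmod 4}\!\! 1
= -E_1(n;4),
\end{equation*}
where the second equality uses that the conditions $d \equiv \pm 1 \pmod 4$ automatically force $d$ to be odd, so the restriction ``$d$ odd'' is redundant and we recover the definition of $E_1(n;4)$ verbatim.

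For the second sum, I multiply each odd divisor $d$ (coprime to~$6$) by~$3$ and track the resulting class modulo $12$: if $d \equiv 1$ or $5 \pmod{12}$ then $3d \equiv 3 \pmod{12}$, so $\xi(3d) = -2$; if $d \equiv 7$ or $11 \pmod{12}$ then $3d \equiv 9 \pmod{12}$, so $\xi(3d) = 2$. Thus $\xi(3d) = 2\,\xi(d)$ whenever $\gcd(d,6) = 1$, and applying the first identity yields the factor of~$2$. The only subtlety I need to guard against is verifying that the hypothesis $3 \nmid n$ truly is used — it is essential precisely so that $\xi$ is only ever evaluated at arguments where the tabulation above applies, and so that no divisor $d$ itself contributes a value $\pm 2$ from class $3$ or $9$. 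Beyond that, the argument is just a finite case check, and no step presents a real obstacle.
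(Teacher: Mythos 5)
Your proof is correct and follows essentially the same route as the paper's: classify the odd divisors (necessarily coprime to $6$) into the residue classes $1,5,7,11 \pmod{12}$, match $\xi(d)=\mp 1$ with $d\equiv \pm 1 \pmod 4$ to get $-E_1(n;4)$, and observe that $3d\equiv 3$ or $9\pmod{12}$ gives $\xi(3d)=2\xi(d)$ for the second sum. No gaps.
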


\begin{proof}
Let $d$ be an odd divisor of~$n$; it is not divisible by~$3$ since $n$ is not. Therefore,
$d \equiv 1,5,7$ or $11 \hskip -3pt\pmod{12}$. 
Observe that $d \equiv 1$ or $5 \hskip -3pt \pmod{12}$ if and only if $d \equiv 1 \hskip -3pt\pmod{4}$ 
since $d \equiv 3 \hskip -3pt\pmod{12}$ is excluded.
Similarly, $d \equiv 7$ or $11  \hskip -3pt\pmod{12}$ if and only if $d \equiv 3 \hskip -3pt\pmod{4}$.
Now, $\xi(d) = -1$ if $d \equiv 1$ or $5$, and $\xi(d) = 1$ if $d \equiv 7$ or $11  \hskip -3pt\pmod{12}$.
Consequently, 
\begin{eqnarray*}
\sum_{d|n \, , \; d\, \mathrm{odd}} \, \xi(d)
& = &  \sum_{d|n \, , \; d \equiv 3 \hskip -7pt\pmod{4}} \, 1 - \sum_{d|n \, , \; d \equiv 1 \hskip -7pt\pmod{4}} \, 1 
=  - E_1(n;4) .
\end{eqnarray*}

Similarly, $\xi(3d) = \xi(3) = -2$ if $d \equiv 1$ or $5$, and $\xi(3d) = \xi(9) = 2$ if $d \equiv 7$ or~$11 \hskip-4pt \pmod{12}$.
Therefore,
\begin{eqnarray*}
\sum_{d|n \, , \; d\, \mathrm{odd}} \, \xi(3d)
& = &  \sum_{d|n \, , \; d \equiv 3 \hskip -7pt\pmod{4}} \, 2 - \sum_{d|n \, , \; d \equiv 1 \hskip -7pt\pmod{4}} \, -2 
=   -2 E_1(n;4) .
\end{eqnarray*}
\end{proof}

\subsection{}\label{ssec-prop-a6}

We now express $a_6(n)$ in terms of the function~$\xi$ introduced above.

\begin{prop}\label{prop-a6}
We have
\begin{equation}\label{eq-a6}
a_6(n) = \sum_{d|n \, , \; d\, \mathrm{odd}} \, \xi\left( \frac{2n}{d} - d\right).
\end{equation}
\end{prop}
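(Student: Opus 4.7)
The plan is to prove the identity by matching the logarithmic derivatives of both sides. Let $G(q) := 1 + \sum_{n \geq 1} q^n \sum_{d \mid n,\, d\text{ odd}} \xi(2n/d - d)$. Since $F(0) = G(0) = 1$, the equality $F = G$ is equivalent to $qF'(q)/F(q) = qG'(q)/G(q)$ as formal power series.

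First I compute $qF'/F$ by factoring $1 - q^n + q^{2n} = (1 - \zeta q^n)(1 - \bar\zeta q^n)$ with $\zeta = e^{i\pi/3}$, taking the logarithmic derivative of $F$, and expanding each rational function as a geometric series:
\[
\frac{qF'(q)}{F(q)} = \sum_{n,k \geq 1} n(\zeta^k + \bar\zeta^k - 2)\, q^{nk} = -4\sum_{n,k \geq 1} n\sin^2(k\pi/6)\, q^{nk},
\]
where the second equality uses $2 - 2\cos(k\pi/3) = 4\sin^2(k\pi/6)$.

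For $G$, I use the complex-exponential identity $\xi(m) = i(\omega^m - \omega^{-m})$ with $\omega = e^{i\pi/6}$ to rewrite
\[
G(q) - 1 = i\omega^2 \sum_{d\text{ odd}} \frac{\omega^{-d}\,q^d}{1 - \omega^2 q^d} - i\omega^{-2} \sum_{d\text{ odd}} \frac{\omega^{d}\,q^d}{1 - \omega^{-2} q^d}.
\]
This rational representation allows a direct computation of $qG'(q)$ and then of the quotient $qG'(q)/G(q)$ as a double series in $q^{nk}$.

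The main obstacle is the coefficient-by-coefficient matching, namely showing that
\[
[q^N]\!\left(\frac{qG'(q)}{G(q)}\right) = -4\sum_{d\mid N}\frac{N}{d}\sin^2(d\pi/6)
\]
for every $N \geq 1$. The verification hinges on partitioning the sums according to the residue of $d$ modulo~$12$ (which dictates the value of $\xi$), then applying the trigonometric product-to-sum formulas $\sin\alpha\sin\beta = \tfrac12(\cos(\alpha-\beta) - \cos(\alpha+\beta))$ and $2\sin\alpha\cos\beta = \sin(\alpha+\beta) + \sin(\alpha-\beta)$ to collapse the double sums into the single divisor sum on the right. Once this bookkeeping is carried out, both series have the same logarithmic derivative, and the identity $F = G$ follows from the initial condition $F(0) = G(0) = 1$.
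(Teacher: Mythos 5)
Your reduction to logarithmic derivatives is sound in principle, and your computation on the product side is correct: factoring $1-q^n+q^{2n}=(1-\zeta q^n)(1-\bar\zeta q^n)$ and expanding geometric series does give
\[
\frac{qF'(q)}{F(q)}=-4\sum_{n,k\geq 1} n\sin^2(k\pi/6)\,q^{nk},
\qquad
[q^N]\Bigl(\frac{qF'}{F}\Bigr)=-4\sum_{k\mid N}\frac{N}{k}\sin^2(k\pi/6),
\]
and the Lambert-series representation of $G-1$ via $\xi(m)=i(\omega^m-\omega^{-m})$ is also correct. But the decisive step is missing, and the way you describe it is not workable as stated. You cannot ``directly compute'' $qG'(q)/G(q)$ as a double series: $G$ is given as a sum, not a product, so $1/G$ has no accessible coefficient formula and the quotient $qG'/G$ is exactly as opaque as $G$ itself. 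The only viable form of your strategy is to prove the convolution identity $qG'(q)=H(q)\,G(q)$ with $H=qF'/F$, i.e.
\[
N\,g_N=\sum_{a+b=N} h_a\,g_b ,\qquad
g_n=\sum_{d\mid n,\ d\ \mathrm{odd}}\xi\Bigl(\frac{2n}{d}-d\Bigr),\qquad
h_m=-4\sum_{d\mid m}\frac{m}{d}\sin^2(d\pi/6).
\]
This is a genuine Cauchy convolution over all decompositions $N=a+b$, not a pair of divisor sums that product-to-sum formulas will ``collapse''; proving it is essentially as hard as the proposition itself. The sentence ``once this bookkeeping is carried out'' is precisely where the entire content of the proof lies, and it has not been carried out.

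For comparison, the paper's proof is a one-line specialization of a known expansion: formula (9.3) of Fine's \emph{Basic hypergeometric series and applications} gives, for the product defining $\sum_n a_k(n)q^n$, the expansion $1-4\sin(\pi/k)\sum_{n\geq1}\bigl(\sum_{d\mid n,\,d\ \mathrm{odd}}\sin(((2n/d)-d)\pi/k)\bigr)q^n$; taking $k=6$, using $\sin(\pi/6)=1/2$ and $\xi(m)=-2\sin(m\pi/6)$ yields the proposition immediately. If you insist on a self-contained argument, you are in effect reproving Fine's identity, and you should expect to work with the Lambert series itself (e.g.\ via a partial-fraction or theta-quotient argument) rather than with the logarithmic derivative of the sum side.
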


Note that $2n/d - d$ is an odd integer since $d$ is an odd divisor of~$n$. 

\begin{proof}
Set $u = \pi/k$ and $\omega = d$ in Formula\,(9.3) of~\cite[p.\,10]{Fi}.
It becomes
\begin{equation}\label{eq-Fine}
\sum_{n\geq 0} \, a_k(n) \, q^n = 
1 - 4 \sin(\pi/k) \, \sum_{n\geq 1} \, \left( \sum_{d|n \, , \; d\, \text{odd}} \,  
\sin\left(  \left( \frac{2n}{d} - d\right) \frac{\pi}{k} \right)\right) q^n .
\end{equation}

Consider the special case $k=6$ of\,\eqref{eq-Fine}. 
Since $\sin(\pi/6) = 1/2$, Equality\,\eqref{eq-Fine} becomes
\begin{eqnarray*}
\sum_{n\geq 0} \, a_6(n) \, q^n 
& = & 1 - 2 \, \sum_{n\geq 1} \, \left( \sum_{d|n \, , \; d\, \text{odd}} \,  
\sin\left(  \left( \frac{2n}{d} - d\right) \frac{\pi}{6} \right)\right) q^n \\
& = & 1 + \sum_{n\geq 1} \, \left( \sum_{d|n \, , \; d\, \text{odd}} \,  
\xi\left(  \frac{2n}{d} - d\right) \right) q^n 
\end{eqnarray*}
in view of the definition of~$\xi$.
The formula for~$a_6(n)$ follows.
\end{proof}

\begin{proof}[Proof of Theorem\,\ref{th-a6r}]
Let us first mention the following well-known fact (see\,\cite[\S\,51, Th.\,65]{Di}): 
the number $r(n)$ of representations of~$n$ as a sum of two squares is
related to the excess function~$E_1(n;4)$ by
\begin{equation}\label{eq-r=E}
r(n) = 4 \, E_1(n;4)
\end{equation}
for all $n\geq 0$.
It follows from this fact and from\,\eqref{eq-a2r} that
\begin{equation}\label{eq-a6=E}
a_2(n) = (-1)^n 4 \, E_1(n;4).
\end{equation}
We now distinguish three cases according to the residue of~$n$ modulo~$3$.

(a) We start with the case $n \equiv 1 \pmod{3}$. We have $n = 3\ell +1$ for some non-negative integer~$\ell$.
Since the odd divisors~$d$ of~$n$ are not divisible by~$3$, they must satisfy $d \equiv 1,5,7$ or $11 \hskip -3pt\pmod{12}$. 
Such divisors are invertible $\hskip -3pt\pmod{12}$ and we have $d^2 \equiv 1 \hskip -3pt\pmod{12}$.
Consequently, 
\begin{equation*}
\frac{2n}{d} -d \equiv  \frac{2nd^2}{d} -d \equiv  2nd - d \pmod{12} .
\end{equation*}
Hence,
\begin{equation*}
\xi\left( \frac{2n}{d} -d \right) =  \xi(2nd - d) = \xi(6d\ell + d) = ((-1)^d)^{\ell} \xi(d) = (-1)^{\ell} \xi(d)
\end{equation*}
in view of\,\eqref{eq-sinus}. Therefore, by Proposition\,\ref{prop-a6},
\begin{equation*}
a_6(n) = (-1)^{\ell}  \sum_{d|n \, , \; d\, \mathrm{odd}} \, \xi(d) .
\end{equation*}
Together with Lemma\,\ref{lem-xi} and\,\eqref{eq-a6=E}, this implies
\begin{eqnarray*}
a_6(n) & = &  (-1)^{\ell+1} \, E_1(n;4) = (-1)^{n+ \ell+1} \, a_2(n) /4.
\end{eqnarray*}
Finally observe that $n$ is odd (resp.\ even) if $\ell$ is even (resp.\ odd).
Therefore, $a_6(n) = a_2(n) /4$.

(b) Now consider the case $n \equiv 2 \pmod{3}$. We have $n = 3\ell +2$ for some non-negative integer~$\ell$.
Again the odd divisors~$d$ of~$n$ must satisfy $d \equiv 1,5,7$ or $11 \pmod{12}$
since they are not divisible by~$3$. 
Consequently, as above, 
\begin{equation*}
\xi\left( \frac{2n}{d} -d \right) =  \xi(2nd - d) = \xi(6d\ell + 3d) = (-1)^{\ell} \xi(3d).
\end{equation*}
By Lemma\,\ref{lem-xi} and\,\eqref{eq-a6=E}, we obtain
\begin{eqnarray*}
a_6(n) & = & (-1)^{\ell}  \sum_{d|n \, , \; d\, \mathrm{odd}} \, \xi(3d) \\
& = &  (-1)^{\ell+1} \, 2 E_1(n;4) = (-1)^{n+ \ell+1} \, a_2(n) /2.
\end{eqnarray*}
Since $n$ and $\ell$ are of the same parity, we have $a_6(n) = - a_2(n) /2$.

(c) Finally we consider the case when $n$ is divisible by~$3$. 
We write $n = 3^N t$, where $N\geq 1$ and $t$ is not divisible by~$3$.
Any odd divisor $d$ of~$n$ is of the form $d = 3^r s$ for some odd divisor~$s$ of~$t$ and $0 \leq r \leq N$.
Since $t$ and its divisors~$s$ are not divisible by~$3$ and since $s$ is odd, we again have $s \equiv 1,5,7$ or $11\!\pmod{12}$. 
Recall that for such~$s$ we have $s^2 \equiv 1 \!\!\pmod{12}$.
Thus, for $d = 3^r s$, we obtain
\begin{equation*}
\frac{2n}{d} - d \equiv \left( 2 \cdot 3^{N-r} t - 3^r \right) s \pmod{12}.
\end{equation*}

If $r=0$, then $2n/d-d \equiv (6\cdot 3^{N-1} t -1)s \pmod{12}$. Therefore,
\begin{equation*}
\xi\left( \frac{2n}{d} - d \right) = \xi\left((6\cdot 3^{N-1} t -1)s \right)  = (-1)^t \xi(-s) = (-1)^{t-1} \xi(s).
\end{equation*}
in view of\,\eqref{eq-sinus}.

If $0<r <N$, then $2n/d-d \equiv (6\cdot 3^{N-r-1} t - 3^r)s \pmod{12}$. Therefore,
\begin{equation*}
\xi\left( \frac{2n}{d} - d \right) = \xi\left((6\cdot 3^{N-r-1} t - 3^r)s \right)= (-1)^t \,\xi(-3^r s) = (-1)^{t-1} \,\xi(3^r s).
\end{equation*}
Now, $3^r \equiv 3 \hskip -3pt \pmod{12}$ if $r$ is odd, and $3^r \equiv -3$ if $r > 0$ is even. 
Then by\,\eqref{eq-sinus}, 
\begin{equation*}
\xi\left( \frac{2n}{d} - d \right) = (-1)^{t-r} \, \xi(3s).
\end{equation*}

Now consider the case $r = N$. If $N$ is odd, then $3^N \equiv 3 \hskip -3pt \pmod{12}$ and
\begin{equation*}
\xi\left( \frac{2n}{d} - d \right) =  \xi\left((2t-3^N)s \right) = \xi((2t-3)s) .
\end{equation*}

Now, if $t$ is odd, then $t \equiv 1,5,7$ or $11 \!\pmod{12}$.
We have $2t-3 \equiv 7$ or $11 \!\pmod{12}$ and the multiplication by~$7$ or by~$11$ exchanges
the sets $\{1,5\}$ and $\{7,11\}$. Since by\,\eqref{eq-xi} the function~$\xi$ takes opposite values on such sets, 
we have $\xi((2t-3)s) = -\xi(s)$.
Consequently, $\xi(2n/d-d) = -\xi(s)$ when $t$ is odd.

If $t$ is even, then $t \equiv 2,4,8$ or $10 \!\pmod{12}$. Then $2t-3 \equiv 1$ or $5 \!\pmod{12}$.
The multiplication by~$1$ or by~$5$ preserves each set $\{1,5\}$ and $\{7,11\}$, so that by\,\eqref{eq-xi}
we have $\xi((2t-3)s) = \xi(s)$.
In conclusion, 
\begin{equation*}
\xi(2n/d-d) = (-1)^t \, \xi(s)
\end{equation*}
when $r = N$ is odd.

If $r= N$ is even, then $3^N \equiv -3 \hskip -3pt \pmod{12}$ and
$\xi(2n/d-d) = \xi((2t-3^N)s) = \xi((2t+3)s)$. A reasoning as in the odd~$N$ case shows that when $N$ is even we have
\begin{equation*}
\xi(2n/d-d) = (-1)^{t-1} \, \xi(s).
\end{equation*}

We can now compute $a_6(n)$. We start with the case of odd $N$. Collecting the above information, we obtain
\begin{eqnarray*}
a_6(n) & = & \sum_{d|n \, , \; d\, \mathrm{odd}} \, \xi\left( \frac{2n}{d} - d\right) 
= \sum_{s|t \, , \; s\, \mathrm{odd}} \; \sum_{r=0}^N\, \xi\left( \frac{2\cdot 3^Nt}{3^r s} - d\right)\\
& = & \sum_{s|t \, , \; s\, \mathrm{odd}} \, \left( (-1)^{t-1} \xi(s) + \left(\sum_{r=1}^{N-1}\, (-1)^{t-r}\right)  \xi(3s) 
+ (-1)^t\, \xi(s) \right)\\
& = &  \left( (-1)^{t-1} + (-1)^t \right) \, \sum_{s|t \, , \; s\, \mathrm{odd}} \, \xi(s) = 0.
\end{eqnarray*}
On the other hand, since the power of~$3$ in~$n$ is odd, then by\,\eqref{eq-calculE} we have 
$a_2(n) = (-1)^n 4 \, E_1(n,4) = 0$.
Therefore, $a_6(n) = a_2(n)$ in this case.

If $N$ is even, then
\begin{eqnarray*}
a_6(n) & = & \sum_{d|n \, , \; d\, \mathrm{odd}} \, \xi\left( \frac{2n}{d} - d\right) 
= \sum_{s|t \, , \; s\, \mathrm{odd}} \;  \sum_{r=0}^N\, \xi\left(\frac{2\cdot 3^Nt}{3^r s} - d\right)\\
& = & \sum_{s|t \, , \; s\, \mathrm{odd}} \, \left( (-1)^{t-1} \xi(s) + \left(\sum_{r=1}^{N-1}\, (-1)^{t-r}\right)  \xi(3s) + (-1)^{t-1} \xi(s) \right)\\
& = & \sum_{s|t \, , \; s\, \mathrm{odd}} \, \left( 2 (-1)^{t-1}  \xi(s) + (-1)^{t-1} \xi(3s)  \right)
\\
& = &  (-1)^{t-1}\left( 2 \sum_{s|t \, , \; s\, \mathrm{odd}} \, \xi(s) 
+  \sum_{s|t \, , \; s\, \mathrm{odd}}  \, \xi(3s) \right) \\
& = & (-1)^t 4\, E_1(t;4)
\end{eqnarray*}
by Lemma\,\ref{lem-xi}.
Now, by multiplicativity of the excess fonction,
\[
E_1(n;4)= E_1(3^N;4) \, E_1(t;4) = E_1(t;4)
\]
since $E_1(3^N;4) = 1$ for even~$N$.  Finally, $t$ and $n$ being of the same parity, we have
\begin{equation*}
a_6(n) = (-1)^t \, 4\, E_1(t;4))  = (-1)^n \, 4\, E_1(n;4)) = a_2(n).
\end{equation*}
Q.e.d.
\end{proof}

\section{Proof of Theorem\,\ref{th-linear}}\label{sec-th-linear}

Set $f(q) = \eta(z)\eta(2z)\eta(3z)/\eta(6z) = \sum_{n\geq 0} \, a_6(n) \, q^n$ and 
$g(q) = \eta(z)^4/\eta(2z)^2 = \sum_{n\geq 0} \, a_2(n) \, q^n$; see\,\eqref{eq-a6eta}.
To prove Theorem\,\ref{th-linear} it suffices to check that
\[
f(q) = a g(q) + b g(jq) + c g(j^2q),
\]
where $a = 1/4$, $b = (1-j)/4$, and $c = (1-j^2)/4$.
Now,
\begin{eqnarray*}
a g(q) + b g(jq) + c g(j^2q)
& = & a \sum_{n\geq 0} \, a_2(n) \, q^n + b \sum_{n\geq 0} \, a_2(n) \, j^nq^n \\
&&+ c \sum_{n\geq 0} \, a_2(n) \, j^{2n}q^n \\
& = & (a+b+c) \sum_{m\geq 0} \, a_2(3m) \, q^{3m} \\
&& + (a+jb+j^2c) \sum_{m\geq 0} \, a_2(3m+1) \, q^{3m+1}\\
&&  + (a + j^2b + jc) \sum_{m\geq 0} \, a_2(3m+2) \, q^{3m+2}.
\end{eqnarray*}
It follows from\,\eqref{eq-a2a6} that
\begin{eqnarray*}
a g(q) + b g(jq) + c g(j^2q)
& = & (a+b+c) \sum_{m\geq 0} \, a_6(3m) \, q^{3m} \\
&& + 4(a+jb+j^2c) \sum_{m\geq 0} \, a_6(3m+1) \, q^{3m+1}\\
&&  - 2(a + j^2b + jc) \sum_{m\geq 0} \, a_6(3m+2) \, q^{3m+2}.
\end{eqnarray*}
The right-hand side is equal to~$f(q)$ since
$a+b+c = 1$, $a+jb+j^2c = 1/4$, and $a + j^2b + jc = -1/2$.
Q.e.d.

\section{An elementary proof of an identity by Victor Kac}\label{sec-Kac}

In\,\cite[p.~122]{Ka} Victor Kac derived four identities for $\eta$-products from his theory of contragredient Lie superalgebras.
One of these identities, labelled (new$_4$) in \emph{loc.~cit.}, can be rephrased in the following form:
\begin{equation*}
\frac{\eta^2(2z) \eta(3z)}{\eta(z) \eta(6z)} = \sum_{n\in \ZZ} \, (-1)^n f(n) \, q^{n^2},
\end{equation*}
where $f(3m) = 1$, $f(3m + 1) = -1$, and $f(3m + 2) = 0$.
This immediately implies
\begin{equation}\label{Kac-eta}
\frac{\eta^2(2z) \eta(3z)}{\eta(z) \eta(6z)} = \sum_{n\geq 0} \, b(n) \, q^{n^2},
\end{equation}
where $b(0) = 1$, $b(3m) = 2\, (-1)^m = 2\, (-1)^{3m}$ for $m>0$, and $b(n)  = (-1)^{n-1}$ if $n$ is not a multiple of~$3$. 
See also\,\cite[Th.\,8.2]{Ko}.

In a mail dated March 22, 2016 G\"unter K\"ohler observed that our $\eta$-product\,\eqref{def-eta6} 
can be written as the product of Kac's $\eta$-product\,\eqref{Kac-eta} and the $\eta$-product $\eta(z)^2/\eta(2z)$
(the latter two being modular forms of weight~$1/2$). Indeed,
\begin{equation}\label{obs-GK}
\frac{\eta(z)\eta(2z)\eta(3z)}{\eta(6z)} 
= \frac{\eta^2(2z) \eta(3z)}{\eta(z) \eta(6z)} \cdot \frac{\eta(z)^2}{\eta(2z)}.
\end{equation}
Now Gauss proved (see\,\cite[(7.324)]{Fi} or\,\cite[19.9\,(i)]{HW}) the following:
\begin{equation}\label{eq-Gauss}
\frac{\eta(z)^2}{\eta(2z)} = \sum_{n\in \ZZ} \, (-1)^n \, q^{n^2}
= \sum_{n\geq 0} \, a(n) \, q^{n^2},
\end{equation}
where $a(0) = 1$ and $a(n) = 2 (-1)^n$ for all $n>0$.
Therefore, the expansion of the right-hand side of~\eqref{obs-GK} is given by
\begin{equation*}
\frac{\eta^2(2z) \eta(3z)}{\eta(z) \eta(6z)} \cdot \frac{\eta(z)^2}{\eta(2z)}
=  \sum_{n \geq 0} \, a'_6(n) \, q^n,
\end{equation*}
where 
\begin{equation}\label{def-a'6}
a'_6(n) = \sum_{x, \, y \geq 0 \atop x^2 + y^2 = n} \, a(x) b(y).
\end{equation}
Consequently, an alternative way to prove Theorem\,\ref{th-a6r} is to establish the following lemma. 

\begin{lemma}\label{lem-GK}
For all $m\geq 0$,
\begin{eqnarray*}
a'_6(3m) & = & (-1)^m \, r(3m), \\
a'_6(3m+1) & = & (-1)^{m+1} \, \frac{r(3m+1)}{4}, \\
a'_6(3m+2) & = & (-1)^{m+1} \, \frac{r(3m+2)}{2}.
\end{eqnarray*}
\end{lemma}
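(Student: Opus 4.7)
The plan is to lift the sum defining $a'_6(n)$ from the non-negative quadrant to the whole lattice $\ZZ^2$. One first checks, by separating the $x=0$ (resp.\ $y=0$) term and pairing $\pm x$ (resp.\ $\pm y$), that the Gauss identity~\eqref{eq-Gauss} and the Kac identity~\eqref{Kac-eta} can be rewritten as
\[
\sum_{x \geq 0} a(x)\, q^{x^2} = \sum_{X \in \ZZ} (-1)^X q^{X^2}, \qquad \sum_{y \geq 0} b(y)\, q^{y^2} = \sum_{Y \in \ZZ} (-1)^Y f(Y)\, q^{Y^2} .
\]
Multiplying these two series and extracting the coefficient of $q^n$ gives the key formula
\[
a'_6(n) = \sum_{(X,Y) \in \ZZ^2,\, X^2+Y^2 = n} (-1)^{X+Y} f(Y) .
\]

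The crucial simplification is that on the constraint $X^2+Y^2 = n$ one has $X + Y \equiv X^2+Y^2 \equiv n \pmod{2}$, so the sign $(-1)^{X+Y} = (-1)^n$ can be pulled out of the sum. The lemma thereby reduces to computing
\[
T(n) := \sum_{(X,Y) \in \ZZ^2,\, X^2+Y^2 = n} f(Y)
\]
in terms of $r(n)$, since then $a'_6(n) = (-1)^n \, T(n)$. Writing $T_i(n)$ for the number of representations $n = X^2+Y^2$ with $Y \equiv i \pmod 3$, we have $T = T_0 - T_1$, and the involution $(X, Y) \mapsto (X, -Y)$ shows that $T_1(n) = T_2(n)$.

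The remaining step is a brief case analysis according to $n \pmod 3$, using that squares modulo $3$ are either $0$ or $1$. If $3 \mid n$, then both $X$ and $Y$ must be divisible by $3$, so $T_0 = r(n)$ and $T_1 = T_2 = 0$, whence $T(n) = r(n)$. If $n \equiv 2 \pmod 3$, then neither $X$ nor $Y$ is divisible by $3$, so $T_0 = 0$ and hence $T_1 = T_2 = r(n)/2$, whence $T(n) = -r(n)/2$. If $n \equiv 1 \pmod 3$, then exactly one of $X, Y$ is divisible by $3$, and the involution $(X, Y) \mapsto (Y, X)$ exchanges the two sub-populations, so $T_0 = r(n)/2$ and then $T_1 = T_2 = r(n)/4$, whence $T(n) = r(n)/4$. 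Combining these values with $(-1)^n$ for $n = 3m,\, 3m+1,\, 3m+2$ yields the three formulas of the lemma at once.

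The main obstacle is purely clerical, namely verifying that the rewriting of the two theta-like series from sums over $\ZZ_{\geq 0}$ to sums over $\ZZ$ is compatible with the piecewise definition of $f$ (the only slightly subtle case being $Y \equiv 2 \pmod 3$, where $f(Y) = 0$ but $Y$ still contributes via $-Y$). Once that lifting is in place and the identity $(-1)^{X+Y} = (-1)^n$ is noticed, the rest is immediate from standard facts about squares modulo~$3$ together with the two natural involutions on the set of representations.
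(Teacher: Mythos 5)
Your proof is correct, and it takes a genuinely different route from the one in the paper. The paper works directly with the quadrant sum $\sum_{x,y\geq 0,\,x^2+y^2=n}a(x)b(y)$, pairing $(x,y)$ with $(y,x)$, treating the boundary terms $x=0$ or $y=0$ separately, and in the case $3\mid n$ decomposing $n=3^Nt$ and splitting further on the parity of $N$ and on whether $t$ is a square; each local contribution to $a'_6(n)$ is then matched against the corresponding contribution to $r(n)$. You instead unfold both theta-like series back to sums over $\ZZ$ (which is exactly the form in which Gauss's identity \eqref{eq-Gauss} and Kac's identity are first stated), obtaining $a'_6(n)=\sum_{X^2+Y^2=n}(-1)^{X+Y}f(Y)$; the observation $(-1)^{X+Y}=(-1)^{X^2+Y^2}=(-1)^n$ then strips out all sign bookkeeping at once, and what remains is $T_0(n)-T_1(n)$, computed from squares modulo $3$ together with the involutions $(X,Y)\mapsto(X,-Y)$ and $(X,Y)\mapsto(Y,X)$. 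Your version buys uniformity: no separate treatment of zero coordinates, no square/non-square dichotomy for $t$, and no $3^N$ decomposition (the case $3\mid n$ with $r(n)=0$ is vacuous rather than requiring the excess-function argument). The one point worth writing out carefully, as you note, is the coefficientwise check that $\sum_{y\geq 0}b(y)q^{y^2}=\sum_{Y\in\ZZ}(-1)^Yf(Y)q^{Y^2}$, i.e.\ that $b(k)=(-1)^k\bigl(f(k)+f(-k)\bigr)$ for $k>0$; this does hold in all three residue classes (for $k\equiv 1$ and $k\equiv 2 \pmod 3$ exactly one of $f(\pm k)$ vanishes and the other equals $-1$, giving $b(k)=(-1)^{k-1}$), so the lifting is sound.
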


Conversely, since the right-hand side of\,\eqref{eq-Gauss} is an invertible formal power series, 
the lemma combined with Gauss's identity\,\eqref{eq-Gauss} and with our elementary proof of Theorem\,\ref{th-a6r}
yields an elementary proof of Kac's identity\,\eqref{Kac-eta}.

\begin{proof}[Proof of Lemma\,\ref{lem-GK} (provided by G.~K\"ohler)]

(a) Suppose first $n = 3m +1$ for some integer $m \geq 0$.
We consider solutions $x\geq 0$, $y \geq 0$ of $x^2 + y^2 = n$. 
Since $n \equiv 1 \pmod{3}$, exactly one of the integers $x$, $y$ is a multiple of~$3$. 
Therefore the solutions can be coupled in pairs $((x,y), (y,x))$, where $3$ divides~$x$, but not~$y$ (hence~$y>0$).
If $x>0$, then the contribution of such a pair to~$a'_6(n)$ is 
\begin{multline*}
2 (-1)^x \cdot (-1)^{y-1} + 2 (-1)^y \cdot 2 (-1)^x\\ 
= 2 (-1)^{x+y} = 2 (-1)^{x^2+y^2} = 2(-1)^n = 2(-1)^{m+1}
\end{multline*}
and it is~$8$ for~$r(n)$, since one has to consider all pairs $(\pm x,\pm y)$ and $(\pm y,\pm x)$, which are distinct.
If~$x = 0$ (which occurs only if $n$ is a square), then the contribution is
\begin{equation*}
(-1)^{y-1} + 2 (-1)^y = (-1)^y = (-1)^{y^2} = (-1)^n = (-1)^{m+1}
\end{equation*}
for~$a'_6(n)$ and it is~$4$ for~$r(n)$ (corresponding to the pairs $(0,\pm y),(\pm y,0)$, which are distinct).
Summing up and comparing the contributions, we obtain $a'_6(n) = (-1)^{m+1} r(n)/4$,
which is the desired formula.

(b) Now let $n = 3m +2$ for some integer $m \geq 0$.
We again consider solutions $x\geq 0$, $y \geq 0$ of $x^2 + y^2 = n$. Since $n \equiv 2 \pmod{3}$,
none of $x$, $y$ is divisible by~$3$, and in particular $x>0$ and $y>0$.
The contribution of $(x,y)$ is
\begin{equation*}
2 (-1)^x \cdot (-1)^{y-1} = 2 (-1)^{x+y-1} = 2 (-1)^{x^2+y^2-1} = 2(-1)^{n-1} = 2(-1)^{m+1}
\end{equation*}
for~$a'_6(n)$ and it is~$4$ for~$r(n)$.
Summing up and comparing the contributions, we obtain $a'_6(n) = (-1)^{m+1} r(n)/2$.

(c) Finally let $n = 3m$. For $n=0$, the result is clear, so we may assume that $n>0$. 
Write $n = 3^N t$, where $N\geq 1$ and $t$ is not divisible by~$3$.
If~$N$ is odd, then by the remark preceding \eqref{eq-calculE} and by\,\eqref{eq-r=E} we have $r(n) = 4E(n;4) = 0$. 
Hence the sum\,\eqref{def-a'6} defining~$a'_6(n)$ is empty, which implies $a'_6(n) = 0 = (-1)^m r(3m)$. 

So let $N = 2s>0$ be even. It is easy to check that the solutions of $x^2 + y^2 = n = 3^{2s} t$ are of the form
$x = 3^s u$ and $y = 3^s v$, where $u^2 + v^2 = t$. 
If $t$ is not a square, then there is no solution where $u= 0$ or $v=0$, and we obtain
\begin{eqnarray*}
a'_6(n) 
& = & \sum_{u, \, v \geq 0 \atop u^2 + v^2 = t} \, 2(-1)^{3^s u} \cdot 2 (-1)^{3^{s}v}
= 4 \sum_{u, \, v \geq 0 \atop u^2 + v^2 = t} \, (-1)^u  (-1)^v \\
& = & 4 \sum_{u, \, v \geq 0 \atop u^2 + v^2 = t} \, (-1)^{u+v} 
= 4 \sum_{u, \, v \geq 0 \atop u^2 + v^2 = t} \, (-1)^{u^2+v^2}\\
& = & 4 (-1)^t \sum_{u, \, v \geq 0 \atop u^2 + v^2 = t} \, 1 \\
& = & (-1)^t r(t) = (-1)^m r(n).
\end{eqnarray*}
If $t = w^2$ is a square, then we have the additional solutions $(u,v) = (w,0)$ and $(u,v) = (0,w)$, 
hence $(x,y)=(3^sw,0)$ and $(x,y)=(0,3^sw)$.
This yields an additional contribution of 
$2(-1)^{3^sw}+2(-1)^{3^sw} = 4(-1)^{(3^sw)^2} = 4(-1)^n$ for~$a'_6(n)$, 
and for $r(n)$ it is~$4$; thus we have proved $a'_6(n) = (-1)^n r(n) = (-1)^m r(3m)$.
\end{proof}

\subsection*{Acknowledgment}

We are very grateful to G\"unter K\"ohler for having suggested the alternative proof of Theorem\,\ref{th-a6r} 
presented in Section\,\ref{sec-Kac}, provided the proof of Lemma\,\ref{lem-GK} and allowed us to include it in this note.
We also thank Robin Chapman, John McKay, Jean-Fran\c cois Mestre and Jean-Pierre Wintenberger for their remarks.


\end{document}